\newtheorem{theorem}{Theorem}[section]
\newtheorem{lemma}[theorem]{Lemma}
\newtheorem{proposition}[theorem]{Proposition}
\newtheorem{claim}[theorem]{Claim}
\theoremstyle{definition}
\theoremstyle{remark}
\newtheorem{remark}[theorem]{Remark}
\numberwithin{equation}{section}
\begin{document}

\title[Generalized torsion elements]{Generalized torsion elements in the knot groups of twist knots}


\author{Masakazu Teragaito}
\address{Department of Mathematics and Mathematics Education, Hiroshima University,
1-1-1 Kagamiyama, Higashi-hiroshima, Japan 739-8524.}
\email{teragai@hiroshima-u.ac.jp}
\thanks{The author was partially supported by Japan Society for the Promotion of Science,
Grant-in-Aid for Scientific Research (C), 25400093.}

\subjclass[2010]{Primary 57M25; Secondary 57M05, 06F15}

\date{}


\commby{}

\begin{abstract}
It is well known that any knot group is torsion-free, but it may admit
a generalized torsion element.
We show that the knot group of any negative twist knot admits
a generalized torsion element.
This is a generalization of the same claim for the knot $5_2$, which is
the $(-2)$-twist knot, by Naylor and Rolfsen.
\end{abstract}

\maketitle


\section{Introduction}

For a knot $K$ in the $3$-sphere $S^3$, the \textit{knot group\/} of $K$
is the fundamental group of the complement $S^3-K$.
It is a classical fact that any knot group is torsion-free.
However, many knot groups can have generalized torsion elements.
For a group, a non-trivial element is called a \textit{generalized torsion element\/}
if some non-empty finite product of its conjugates is the identity.
Typical exmples are 
the knot groups of torus knots
(see \cite{NR}).
It was open whether the knot group of a hyperbolic knot
admits a generalized torsion element or not. 
However, Naylor and Rolfsen \cite{NR} first found such an element
in the knot group of the hyperbolic knot $5_2$.
As they wrote, the element was found with the help of computer, so
a (topological) meaning of the element is not obvious.

The knot $5_2$ is the $(-2)$-twist knot.
See below for the convention of twist knots.
In this article, we completely determine the existence
of generalized torsion elements in the knot groups of twist knots as a generalization of Naylor-Rolfsen's result. 
Figure \ref{fig:knot} shows the $m$-twist knot, where
the rectangle box contains the right-handed (resp.~left-handed)
horizontal $m$-full twists if $m>0$ (resp.~$m<0$).
By this convention, the $1$-twist knot is the figure-eight knot, and
the $(-1)$-twist knot is the right-handed trefoil as shown in Figure \ref{fig:knot}.
We may call the $m$-twist knot a positive or negative twist knot,
according to the sign of $m$.

\begin{figure}[tb]
\includegraphics*[scale=0.5]{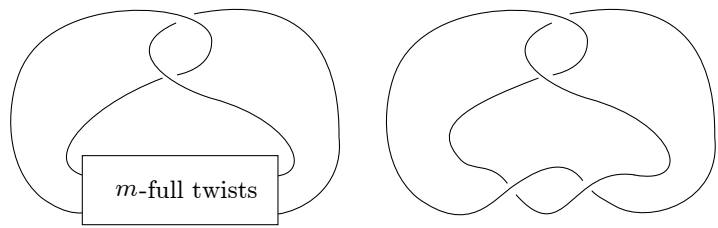}
\caption{The $m$-twist knot and the $(-1)$-twist knot}
\label{fig:knot}
\end{figure}

There is a well known relation between the bi-orderability and the existence
of generalized torsion elements in a group.
We recall that a group is said to be \textit{bi-orderable\/}
if it admits a strict total ordering,
which is invariant under multiplication on both sides.
Then any bi-orderable group has no generalized torsion elements (\cite{NR}).
The converse statement is not true, in general (\cite{MR}).

According to \cite{CDN}, the knot group of any positive twist knot
is bi-orderable, but
that of any negative twist knot is not bi-orderable.
Therefore, the former does not admit
a generalized torsion element, but
we can expect that the latter
would admit it.
In fact, Naylor and Rolfsen \cite{NR} confirmed this claim for the $(-2)$-twist knot.
The main result of the present article is to verify this expectation.

\begin{theorem}\label{thm:main}
The knot group of any negative twist knot admits
a generalized torsion element.
\end{theorem}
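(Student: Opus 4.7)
The plan is to produce, for each $n \ge 1$, an explicit element in the knot group $G_n$ of the $(-n)$-twist knot together with a finite list of conjugates whose product is trivial, modeled on the Naylor--Rolfsen witness for the $5_2$ knot. The starting point is the standard two-bridge presentation
\[ G_n = \langle a, b \mid a w_n = w_n b \rangle, \]
where $a$ and $b$ are meridians and $w_n$ is an explicit word whose length grows linearly in $n$; for $n = 2$ this specializes to the presentation used in \cite{NR}.

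With this presentation in hand, I would first reverse-engineer the Naylor--Rolfsen element for $n = 2$ to propose a uniform family $g_n \in G_n$ together with conjugators $h_{n,1}, \dots, h_{n,k}$ (with $k = k(n)$) for which
\[ \prod_{i=1}^{k} h_{n,i}\, g_n\, h_{n,i}^{-1} = 1 \]
in $G_n$. A natural first guess is that $g_n$ is a commutator of short words in $a$ and $b$ and that the $h_{n,i}$ are powers of a single element such as $a$ or $a b^{-1}$. The intuition is that the commutator subgroup of $G_n$ carries a structure governed by the Alexander polynomial of the $(-n)$-twist knot, whose unit-circle roots account for the failure of bi-orderability by \cite{CDN}; one expects the witnessing element to be built from the corresponding eigen-data.

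Once a candidate is in hand, the verification splits in two. First, one expands the product of conjugates as a word in the free group on $a, b$ and reduces it to the identity using the relator $a w_n = w_n b$ repeatedly; because $w_n$ has a self-similar structure built from $n$ copies of a common block, the reduction should proceed by induction on $n$, with a single base-case simplification reused at each step. Second, one must verify that $g_n \ne 1$, which can be done by checking that $g_n$ lies nontrivially in the commutator subgroup via abelianization and then separating it from the identity using, for example, a homomorphism to a suitable finite group or an appeal to residual finiteness.

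The main obstacle is identifying the uniform family $g_n$: the Naylor--Rolfsen element for $5_2$ was found by computer search, and producing a conceptually motivated family that provably works for every negative twist knot is the decisive step. Once the family is pinned down, the inductive reduction of the conjugate product should be a mechanical if lengthy computation.
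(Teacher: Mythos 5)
There is a genuine gap: your proposal defers precisely the step that constitutes the proof. You write that the ``decisive step'' is identifying the uniform family $g_n$ and its conjugators, and you offer only heuristics (a ``natural first guess,'' an appeal to Alexander-polynomial eigen-data) for how one might find it. No candidate element is actually exhibited, no identity among conjugates is written down, and the proposed inductive reduction cannot even be started without a concrete word to reduce. As it stands, the proposal is a research plan rather than a proof, and the plan's hardest component --- the one you correctly identify as decisive --- is left open.

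It is worth contrasting this with how the paper sidesteps the need to guess a closed-form product of conjugates in advance. The paper works not with the two-bridge presentation but with a Lin-type presentation $\langle a,b,t \mid tat^{-1}=b^{-1}a,\ t(b^na^{-1})t^{-1}=b^n\rangle$ coming from a Seifert surface, where $t$ is a meridian and $a,b$ generate the complement of the surface neighborhood. The candidate element is simply $D=[t,b]$, and instead of exhibiting one explicit vanishing product, the paper considers the semigroup $\langle D\rangle$ of all nonempty products of conjugates of $D$ and proves that \emph{both} $b^n$ and $b^{-n}$ lie in it (via the identity $[x,y^m]=[x,y][x,y^{m-1}]^y$ and two inductive claims exploiting the symmetry $t\to t^{-1}$, $b\to b^{-1}$ of the key relation). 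Then $b^n\cdot b^{-n}=1$ shows the identity lies in $\langle D\rangle$. Nontriviality of $D$ is immediate: $G$ is generated by $t$ and $b$, so $[t,b]=1$ would force $G$ abelian, contradicting nontriviality of the knot. If you want to complete your approach, the lesson is that the ``semigroup of conjugates'' framework lets you assemble the witnessing product from separately verified pieces rather than conjuring it whole; without some such device, your outline does not yet contain a proof.
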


Throughout the paper, we use the convention 
$x^g=g^{-1}xg$ for the conjugate of $x$ by $g$ and 
$[x,y]=x^{-1}y^{-1}xy$ for the commutator.

\section{Presentations of knot groups}\label{sec:pre}

For $n\ge 1$,
let $K$ be the $(-n)$-twist knot.
We prepare a certain presentation of the knot group of $K$ by using its
Seifert surface.
Figure \ref{fig:seifert} shows a Seifert surface $S$ where $n=2$.
The fundamental group $\pi_1(S)$ is a free group of rank two, generated by $x$ and $y$ as
illustrated there.
Since the complement of the regular neighborhood $N(S)$ of $S$ in $S^3$
is a genus two handlebody,
$\pi_1(S^3-N(S))$ is also a free group of rank two, generated by $a$ and $b$.

\begin{figure}[tb]
\includegraphics*[scale=0.5]{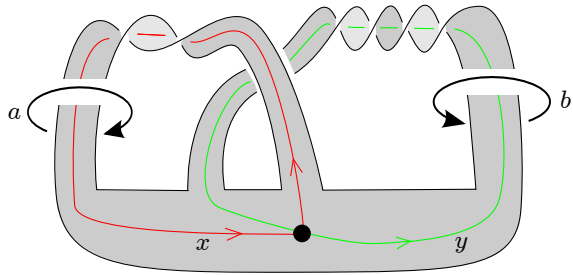}
\caption{A Seifert surface ($n=2$)}
\label{fig:seifert}
\end{figure}

\begin{theorem}\label{thm:pre}
The knot group $G$ of the $(-n)$-twist knot $K$ has a presentation
\[
\langle a,b,t\mid tat^{-1}=b^{-1}a, t(b^na^{-1})t^{-1}=b^n \rangle,
\]
where $t$ is a meridian.
\end{theorem}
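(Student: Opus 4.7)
The natural approach is to realize $G$ as an HNN extension obtained by cutting the knot exterior along the Seifert surface $S$. Since $S$ has genus one, the complement $H = S^3 \setminus N(S)$ is a genus-two handlebody, hence $\pi_1(H) = F_2\langle a, b\rangle$ with the generators already chosen in the excerpt. Regluing $N(S)$ to $H$ to recover the knot exterior identifies the two copies $S^+$ and $S^-$ of $S$ in $\partial H$, and this identification is encoded by adjoining a stable letter $t$ (which we take to be a meridian of $K$, since the meridian crosses $S$ once transversely) subject to the relations $t \cdot \iota_+(\gamma) \cdot t^{-1} = \iota_-(\gamma)$ for all $\gamma \in \pi_1(S)$, where $\iota_\pm \colon \pi_1(S) \hookrightarrow \pi_1(H)$ are the two push-off homomorphisms.

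Since $\pi_1(S)$ is free on $x$ and $y$, only two HNN relations are needed, so the proof reduces to computing the four words $\iota_+(x), \iota_-(x), \iota_+(y), \iota_-(y)$ in $\langle a, b\rangle$. To do this, I would fix orientations for $x, y$ and compatible orientations for the dual loops $a, b$ in $H$, and then read off the push-offs directly from Figure~\ref{fig:seifert}. The loop $x$ passes once through the clasped band, so its two push-offs differ by a single factor of $b$; an inspection with the chosen conventions should yield $\iota_+(x) = a$ and $\iota_-(x) = b^{-1}a$. The loop $y$ passes through the twisted band, whose $n$ full twists produce an $n$-th power; this should give $\iota_+(y) = b^n a^{-1}$ and $\iota_-(y) = b^n$. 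Substituting into the HNN relations then produces exactly the presentation in the statement.

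The main obstacle I expect is the geometric bookkeeping in this second step: one must fix sign conventions for the twists and orientations for $x, y, a, b$ consistently enough that the push-offs produce the specific words $a$, $b^{-1}a$, $b^n a^{-1}$, $b^n$ on the nose, rather than inverses, cyclic conjugates, or variants with $n$ replaced by $-n$ (recall $K$ is the $(-n)$-twist knot, not the $n$-twist knot, so the twisting contributes with a specific sign). A useful consistency check is to abelianize both relations: they force $a \mapsto 0$ and $b \mapsto 0$ and leave $G^{\mathrm{ab}}$ generated by the class of $t$, matching the well-known $H_1(S^3 - K) \cong \mathbb{Z}$ with the meridian as generator. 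A secondary check is to verify that the associated matrix of the Seifert form reproduces the Alexander polynomial of the $(-n)$-twist knot, which gives additional confidence that the push-off words have been computed with the right signs.
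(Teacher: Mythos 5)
Your proposal is correct and follows essentially the same route as the paper: the paper invokes Lin's Lemma 2.1 (the van Kampen/HNN decomposition along the genus-one Seifert surface) to get generators $a,b,t$ with relations $tx^+t^{-1}=x^-$ and $ty^+t^{-1}=y^-$, and then simply asserts the same four push-off words $x^+=a$, $x^-=b^{-1}a$, $y^+=b^na^{-1}$, $y^-=b^n$ that you predict from the figure. The sign bookkeeping you flag as the main obstacle is indeed where the real content lies, and the paper handles it exactly as you propose, by reading the push-offs directly off the Seifert surface picture.
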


\begin{proof}
If we push $x$ and $y$ off from the Seifert surface to the front side,
then we obtain $x^+=a$ and $y^+=b^{n}a^{-1}$.
Similarly, if we push them off to the back side,
then $x^-=b^{-1}a$ and $y^-=b^n$.
As shown in \cite[Lemma 2.1]{L} (it is just an application of the van Kampen theorem),
the knot group $G$ is generated by $a,b$ and a meridian $t$ with
two relations $tx^+t^{-1}=x^-$ and $ty^+t^{-1}=y^-$.
\end{proof}

From the second relation, we have
\begin{equation}\label{eq:a}
a=[t,b^{n}].
\end{equation}
Thus the knot group $G$ is generated by $b$ and $t$.

\begin{remark}
This presentation is distinct from one used in \cite{NR} when $n=2$, but
it is equivalent, of course.

By their choices, the generators $a$ and $b$ belong to the commutator subgroup of $G$.
Furthermore, we have $a=[t,b^n]$ and $b=[a^{-1},t^{-1}]$. 
This implies that these elements belong to the intersection of
all members of the lower central series of $G$.
\end{remark}

\begin{lemma}\label{lem:D}
Let $D=[t,b]$.  Then $D\ne 1$.
\end{lemma}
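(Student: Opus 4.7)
The plan is to argue by contradiction, exploiting how the relator $a=[t,b^n]$ together with the first defining relation collapses the whole group once $t$ and $b$ are forced to commute.

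Suppose $D=[t,b]=1$, so $t$ and $b$ commute. Then $t$ commutes with every power of $b$, and in particular
\[
a=[t,b^n]=t^{-1}b^{-n}tb^n=1
\]
by (2.1). Substituting $a=1$ into the first defining relation $tat^{-1}=b^{-1}a$ of the presentation in Theorem \ref{thm:pre} yields $1=b^{-1}$, so $b=1$ as well. Consequently $G$ is generated by the single element $t$, i.e.\ $G$ is cyclic.

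To obtain the contradiction, I would invoke the fact that for $n\ge 1$ the $(-n)$-twist knot $K$ is a nontrivial knot, so its knot group $G$ is not infinite cyclic. (This follows, for example, from the well-known Alexander polynomial $\Delta_K(t)=n t^2-(2n+1)t+n$, which is not $\pm 1$; equivalently, $H_1$ of the infinite cyclic cover is nontrivial, so the commutator subgroup of $G$ is nontrivial.) This contradiction shows $D\ne 1$.

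I do not expect any serious obstacle: the computation is a direct manipulation of the two relators, and the nontriviality of negative twist knots is standard. The only thing to be careful about is citing (or briefly justifying) why $G$ fails to be cyclic, and confirming that the argument works uniformly for all $n\ge 1$ rather than case by case.
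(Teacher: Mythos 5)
Your proof is correct and is essentially the paper's argument: the paper simply notes that $D=1$ would force $G=\langle t,b\rangle$ to be abelian, contradicting the nontriviality of the knot, while you push one step further to show $G$ would be cyclic. (Minor quibble: your quoted Alexander polynomial $nt^2-(2n+1)t+n$ is the one for the $(+n)$-twist knot in this paper's convention --- for $n=1$ it gives the figure-eight polynomial rather than the trefoil's --- but this does not affect the argument, since either polynomial is non-trivial.)
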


\begin{proof}
If $D=1$, then the knot group $G$ would be abelian, because $G$ is generated by $t$ and $b$.
Since the knot is non-trivial, this is impossible.
\end{proof}

\begin{lemma}\label{lem:a}
$a=DD^b \cdots D^{b^{n-1}}$.
\end{lemma}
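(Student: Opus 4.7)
The plan is to prove the identity by a direct telescoping computation, starting from equation (\ref{eq:a}) which already gives $a=[t,b^n]=t^{-1}b^{-n}tb^n$. The aim is to rewrite this as a product of conjugates of $D=t^{-1}b^{-1}tb$.

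First I would write each factor explicitly as
\[
D^{b^k}=b^{-k}(t^{-1}b^{-1}tb)b^k = b^{-k}t^{-1}b^{-1}t\,b^{k+1}.
\]
Then I would form the product $\prod_{k=0}^{n-1} D^{b^k}$ and observe the telescoping: the trailing $b^{k+1}$ of the $k$-th factor cancels exactly with the leading $b^{-(k+1)}$ of the $(k+1)$-st factor. After all cancellations, the product collapses to $(t^{-1}b^{-1}t)^n b^n$, which equals $t^{-1}b^{-n}t b^n=[t,b^n]$. Applying (\ref{eq:a}) identifies this with $a$.

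An equivalent, perhaps cleaner, presentation is induction on $n$: for $n=1$ the identity reduces to $a=D=[t,b]$, which matches (\ref{eq:a}); for the inductive step, one uses the general commutator identity $[t,b^{n+1}]=[t,b^n]\cdot b^{-n}[t,b]b^n$ and invokes the induction hypothesis on the first factor. Either approach works, but the telescoping argument is more transparent and requires no separate commutator identity.

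I do not expect a serious obstacle here: the statement is essentially a formal rewriting inside the free-group-like expression for $a$ given by (\ref{eq:a}), and no use of the defining relations of $G$ beyond (\ref{eq:a}) itself is required. The only minor care point is keeping track of indices in the telescoping so that the product of the $n$ copies of $t^{-1}b^{-1}t$ together with the surviving $b^n$ on the right is correctly identified with $t^{-1}b^{-n}t b^n$.
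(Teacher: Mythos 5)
Your proof is correct and takes essentially the same route as the paper: the paper simply quotes the commutator identity $[x,y^m]=[x,y][x,y^{m-1}]^y$ and iterates it on $a=[t,b^n]$ from (\ref{eq:a}), which is exactly your induction variant (up to peeling the conjugated factor off the opposite end), and your telescoping computation is that same argument written out explicitly. No gaps.
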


\begin{proof}
In general, $[x,y^m]=[x,y][x,y^{m-1}]^y$.
The conclusion immediately follows from this and (\ref{eq:a}).
\end{proof}

\section{Proof of Theorem \ref{thm:main}}

In this section, we prove that $D=[t,b]$ is a generalized torsion element of $G$.
Let $\langle D\rangle$ be the semigroup consisting of all non-empty products
of conjugates of $D$ in $G$. 
That is,
\[
\langle D\rangle=\{D^{g_1}D^{g_2}\cdots D^{g_m} \mid g_1,g_2,\dots, g_m\in G, m\ge 1\}.
\]
If the semigroup $\langle D\rangle$ is shown to contain the identity, then
we can conclude that $D$ is a generalized torsion element.

From the second relation of the presentation in Theorem \ref{thm:pre},
$(tb^nt^{-1})(ta^{-1}t^{-1})=b^n$.
Then $tb^nt^{-1}=b^n(tat^{-1})$.
By the first relation of the presentation, we have
$tb^nt^{-1}=b^{n-1}a$.
Thus, by (\ref{eq:a}), we have
\begin{equation}\label{eq:base}
tb^nt^{-1}=b^{n-1}t^{-1}b^{-n}tb^n.
\end{equation}
This can also be modified to 
\begin{equation}\label{eq:base2}
b^{-(n-1)}tb^nt^{-1}=[t,b^n].
\end{equation}

\begin{proposition}\label{prop:n}
$b^n\in \langle D\rangle$.
\end{proposition}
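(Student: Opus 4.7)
My plan is to exhibit $b^n$ as an explicit finite product of (positive) conjugates of $D$. The two key ingredients are already in hand: Lemma~\ref{lem:a} places $a=DD^b\cdots D^{b^{n-1}}$ (and hence every conjugate of $a$) into $\langle D\rangle$, and the rearrangement of (\ref{eq:base2}) gives $tb^n t^{-1}=b^{n-1}a$.

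I would first conjugate this latter equation by $t$ to obtain $b^n=(b^{n-1})^t\,a^t$. Since $a^t\in\langle D\rangle$, it suffices to express $(b^{n-1})^t$ as a product of conjugates of $D$. A naive downward induction on the exponent of $b$ is unavailable, because the twist-knot presentation singles out the exponent $n$ and provides no analogous relation for $b^{n-1}$. My plan is instead to combine (\ref{eq:base}) and (\ref{eq:base2}) together with their $t$- and $b$-conjugates. From the second presentation relation one derives $(b^n)^t=b^n a^{-1}$, and from the first $(b^{-1})^t = a\,(a^t)^{-1}$; substituting $(b^{n-1})^t=(b^n)^t(b^{-1})^t$ rewrites $b^n$ as a word in $b^n$ itself and conjugates of $a^{\pm 1}$. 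Iterating this rewriting and using Lemma~\ref{lem:a} to re-expand each $a^{\pm 1}$ in terms of $D^{\pm 1}$ should, after sufficiently many steps, produce an expression in which the $b^n$'s drop out and the surviving factors are all positive conjugates of $D$.

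I expect the principal obstacle to be sign bookkeeping. Since $\langle D\rangle$ is a sub-semigroup and not a subgroup, every factor of the final product must be a conjugate of $D$ itself, never of $D^{-1}$. The intermediate rewrites produced by the scheme above inevitably contain both positive and negative conjugates of $D$, so the crux of the argument is to choose the order and number of iterations so that every negative conjugate is eventually absorbed, via the defining relations of $G$, into a product of positive ones.
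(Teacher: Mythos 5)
Your setup is sound and matches the paper's starting point: the identity $tb^nt^{-1}=b^{n-1}a$ together with Lemma \ref{lem:a} (which puts $a$, and hence every conjugate of $a$, into $\langle D\rangle$) correctly reduces the problem to showing $(b^{n-1})^t\in\langle D\rangle$. But the iteration you propose for that step is circular. From the presentation one gets $(b^n)^t=b^na^{-1}$ and $(b^{-1})^t=a(a^t)^{-1}$, so your substitution yields
\[
(b^{n-1})^t=(b^n)^t(b^{-1})^t=b^na^{-1}\cdot a(a^t)^{-1}=b^n(a^t)^{-1},
\]
and plugging this back into $b^n=(b^{n-1})^t\,a^t$ returns the tautology $b^n=b^n$. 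Iterating the rewriting therefore produces nothing: each pass cancels exactly the conjugate of $a$ introduced in the previous one, and the $b^n$ never drops out. More importantly, the obstacle you flag at the end --- that every factor must be a conjugate of $D$ itself, never of $D^{-1}$ --- is not sign bookkeeping to be deferred; it is the entire content of the proposition, and your plan contains no mechanism for absorbing the negative conjugates of $D$ that each occurrence of $a^{-1}$ forces into the expression.

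The paper's proof supplies precisely this missing mechanism. It conjugates $b^{n-1}t^{-1}b^{-n}tb^n$ by $t^{-(n-1)}$ and applies (\ref{eq:base2}) repeatedly to reach $t^{n-1}b^{n-1}(t^{-1}b^{-1})^{n-1}\cdot[t,b^n]$, and then uses two telescoping identities (Claims \ref{cl:1} and \ref{cl:2}) to write $t^{n-1}b^{n-1}(t^{-1}b^{-1})^{n-1}$ as a product of a single conjugate of $D$ and of elements $[t^{-1},b^{-m}]$. All of these lie in $\langle D\rangle$ because the expansion $[x,y^m]=[x,y][x,y^{m-1}]^y$ writes $[t,b^m]$ as a product of positive conjugates of $D$ only, and $[t^{-1},b^{-m}]=[t,b^m]^{t^{-1}b^{-m}}$. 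Some concrete identity of this kind --- one that manifestly produces only positive conjugates --- is what your argument would need before it could be called a proof.
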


\begin{proof}
We show $tb^nt^{-1}\in\langle D\rangle$, which
imples $b^n\in \langle D\rangle$.
By (\ref{eq:base}), 
it is sufficient to show 
$b^{n-1}t^{-1}b^{-n}tb^n\in\langle D\rangle$.

Conjugate $b^{n-1}t^{-1}b^{-n}tb^n$ by $t^{-(n-1)}$.
By applying (\ref{eq:base2}) repeatedly, we have
\begin{eqnarray*}
t^{n-1}(b^{n-1}t^{-1}b^{-n}tb^n)t^{-(n-1)}&=&
t^{n-1}b^{n-1}(t^{-1}b^{-1})\cdot b^{-(n-1)}tb^nt^{-1}\cdot t^{-(n-2)}\\
&=& t^{n-1}b^{n-1}(t^{-1}b^{-1})\cdot [t,b^n]\cdot t^{-(n-2)}\\
&=& t^{n-1}b^{n-1}(t^{-1}b^{-1})^2\cdot b^{-(n-1)}tb^nt^{-1} \cdot t^{-(n-3)}\\
&=& t^{n-1}b^{n-1}(t^{-1}b^{-1})^2\cdot [t,b^n]\cdot t^{-(n-3)}\\
& \vdots & \\
&=& t^{n-1}b^{n-1}(t^{-1}b^{-1})^{n-1}\cdot [t,b^n].
\end{eqnarray*}

Since $[t,b^n]=a\in \langle D\rangle$,
we can stop here when $n=1$.
Hereafter, we suppose $n\ge 2$.
Then, it suffices to show $t^{n-1}b^{n-1}(t^{-1}b^{-1})^{n-1}\in \langle D\rangle$.

\begin{claim}\label{cl:1}
$t^{n-1}b^{n-1}(t^{-1}b^{-1})^{n-1}=D^{(bt)^{n-2}b^{-(n-1)}t^{-(n-1)}}\cdot
t^{n-1}b^{n-1}(t^{-1}b^{-1})^{n-2}b^{-1}t^{-1}$.
\end{claim}

\begin{proof}[Proof of Claim \ref{cl:1}]
This follows from
\begin{eqnarray*}
D^{(bt)^{n-2}b^{-(n-1)}t^{-(n-1)}} &=&
t^{n-1}b^{n-1}(t^{-1}b^{-1})^{n-2}D(bt)^{n-2}b^{-(n-1)}t^{-(n-1)}\\
&=& t^{n-1}b^{n-1}(t^{-1}b^{-1})^{n-2}\cdot t^{-1}b^{-1}tb\cdot (bt)^{n-2}b^{-(n-1)}t^{-(n-1)}\\
&=& t^{n-1}b^{n-1}(t^{-1}b^{-1})^{n-1}\cdot tb(bt)^{n-2}b^{-(n-1)}t^{-(n-1)}.
\end{eqnarray*}
\end{proof}

If $n=2$, then the right hand side is 
$D^{b^{-1}t^{-1}}$, so we are done.

\begin{claim}\label{cl:2}
If $n\ge 3$, then
\[
t^{n-1}b^{n-1}(t^{-1}b^{-1})^{n-2}b^{-1}t^{-1}=
[t^{-1},b^{-(n-1)}]^{t^{-(n-2)}}\cdots [t^{-1},b^{-3}]^{t^{-2}}[t^{-1},b^{-2}]^{t^{-1}}.
\]
\end{claim}

\begin{proof}[Proof of Claim \ref{cl:2}]
We use an induction on $n$.
When $n=3$, the left hand side is $t^2b^2(t^{-1}b^{-1})b^{-1}t^{-1}$, which is
equal to $[t^{-1},b^{-2}]^{t^{-1}}$.

Assume that
\[
t^{n-2}b^{n-2}(t^{-1}b^{-1})^{n-3}b^{-1}t^{-1}=
[t^{-1},b^{-(n-2)}]^{t^{-(n-3)}}\cdots [t^{-1},b^{-3}]^{t^{-2}}[t^{-1},b^{-2}]^{t^{-1}}.
\]
Multiplying the left hand side $[t^{-1},b^{-(n-1)}]^{t^{-(n-2)}}$ from the left gives
\[
[t^{-1},b^{-(n-1)}]^{t^{-(n-2)}}t^{n-2}b^{n-2}(t^{-1}b^{-1})^{n-3}b^{-1}t^{-1}
=t^{n-1}b^{n-1}(t^{-1}b^{-1})^{n-2}b^{-1}t^{-1}.
\]
\end{proof}

Recall that $[t,b^m]\in \langle D\rangle$ for $m\ge 1$, as in the proof of Lemma \ref{lem:a}.
Also, $[t^{-1},b^{-m}]=[t,b^m]^{t^{-1}b^{-m}}\in \langle D\rangle$.
By Claims \ref{cl:1} and \ref{cl:2},
$t^{n-1}b^{n-1}(t^{-1}b^{-1})^{n-1}\in \langle D\rangle$.
This completes the proof.
\end{proof}

\begin{proposition}\label{prop:-n}
$b^{-n}\in \langle D\rangle$.
\end{proposition}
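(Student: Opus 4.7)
The plan is to mirror the strategy of Proposition~\ref{prop:n}. Since $\langle D\rangle$ is closed under conjugation in $G$, it suffices to show that some conjugate of $b^{-n}$ belongs to $\langle D\rangle$; I will target $tb^{-n}t^{-1}$. Inverting both sides of (\ref{eq:base}) gives the dual base identity
\[
tb^{-n}t^{-1}=b^{-n}t^{-1}b^{n}tb^{-(n-1)}=a^{-1}b^{-(n-1)},
\]
which plays the same structural role here as (\ref{eq:base}) did in Proposition~\ref{prop:n}.

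Next, I would conjugate this expression by a suitable power of $t$ (presumably $t^{n-1}$) and iteratively apply the inverse of (\ref{eq:base2}), namely $b^{-n}t^{-1}b^{n}t=a^{-1}$, to telescope the interior. This should produce an expression of the form (a computable outer product) $\cdot$ (a terminal $a^{-1}$-factor), mirroring the way the proof of Proposition~\ref{prop:n} reduced its target to an expression ending in $a=[t,b^n]$. I would then prove two claims analogous to Claim~\ref{cl:1} and Claim~\ref{cl:2}, showing that the outer product --- now of a form such as $t^{n-1}b^{-(n-1)}(t^{-1}b)^{n-1}$ --- is a product of conjugates of elements already known to lie in $\langle D\rangle$, using that $[t,b^m]$ and $[t^{-1},b^{-m}]$ and their conjugates all belong to $\langle D\rangle$, exactly as noted near the end of the proof of Proposition~\ref{prop:n}.

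The principal obstacle is the $a^{-1}$-factor: conjugates of $a$ lie in $\langle D\rangle$ by Lemma~\ref{lem:a}, but conjugates of $a^{-1}$ do not, a priori, and the semigroup structure of $\langle D\rangle$ forbids taking inverses. To overcome this I would exploit Proposition~\ref{prop:n} in the form $(b^n)^t=t^{-1}b^nt=b^na^{-1}\in\langle D\rangle$, together with $a\in\langle D\rangle$, so that elements of the shape $b^na^{-1}$ (times appropriate powers of $b$) are available as legitimate members of $\langle D\rangle$. The combinatorial surgery that packages the stray $a^{-1}$ into such pieces --- effectively identifying the precise combination of conjugates of $a$ and of $b^n$ that realizes $b^{-n}$ as a product of conjugates of $D$ --- is the crux of the argument.
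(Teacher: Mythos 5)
Your proposal correctly identifies the central difficulty --- the semigroup $\langle D\rangle$ is not closed under inversion, so the terminal factor $a^{-1}$ in your identity $tb^{-n}t^{-1}=a^{-1}b^{-(n-1)}$ cannot be handled the way the terminal factor $a=[t,b^n]$ was handled in Proposition~\ref{prop:n} --- but it does not resolve it. You openly defer ``the combinatorial surgery that packages the stray $a^{-1}$'' and call it the crux; that crux \emph{is} the content of the proposition, so as written there is a genuine gap. Moreover, the difficulty is built into your choice of starting identity: $a^{-1}b^{-(n-1)}$ is exactly the inverse of $b^{n-1}a=tb^nt^{-1}$, the element shown to lie in $\langle D\rangle$ in Proposition~\ref{prop:n}, so your plan amounts to proving that the inverse of a known element of $\langle D\rangle$ again lies in $\langle D\rangle$. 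That cannot come from formal semigroup bookkeeping with $a\in\langle D\rangle$ and $(b^n)^t=b^na^{-1}\in\langle D\rangle$ alone; it requires exhibiting a specific factorization into conjugates of $D$ using the relations of $G$, and none is exhibited.

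The paper's proof sidesteps the $a^{-1}$ obstruction entirely by rearranging (\ref{eq:base}) rather than inverting it: multiplying (\ref{eq:base}) on the right by $b^{-n}$ and on the left by $b^{-(n-1)}$ gives
\[
t^{-1}b^{-n}t=b^{-(n-1)}\cdot tb^nt^{-1}b^{-n}=b^{-(n-1)}\cdot[t^{-1},b^{-n}],
\]
whose terminal factor $[t^{-1},b^{-n}]=[t,b^n]^{t^{-1}b^{-n}}$ is a conjugate of $a$, not of $a^{-1}$, hence already in $\langle D\rangle$. Since (\ref{eq:base}) is invariant under the substitution $t\mapsto t^{-1}$, $b\mapsto b^{-1}$, conjugating by $t^{n-1}$ and transporting Claims~\ref{cl:1} and~\ref{cl:2} through that substitution expresses the outer factor $t^{-(n-1)}b^{-(n-1)}(tb)^{n-1}$ as a product of conjugates of $D$ and of the commutators $[t,b^m]$; no appeal to Proposition~\ref{prop:n} is needed. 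If you want to salvage your outline, replace your base identity with the one above; the rest of your plan then runs in exact parallel with Proposition~\ref{prop:n}.
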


\begin{proof}
The argument is similar to that of the proof of Proposition \ref{prop:n}.
The relation (\ref{eq:base}) can be changed into
\begin{equation}\label{eq:3}
t^{-1}b^{-n}t=b^{-(n-1)}tb^nt^{-1}b^{-n}.
\end{equation}

We can see that the relation (\ref{eq:base}) is invariant
under the transformation $t\to t^{-1}$, $b\to b^{-1}$.
Thus, conjugating (\ref{eq:3}) by $t^{n-1}$
yields the relation
\[
t^{-n}b^{-n}t^n=t^{-(n-1)}b^{-(n-1)}(tb)^{n-1}\cdot [t^{-1},b^{-n}].
\]

When $n=1$, the right hand side is $[t^{-1},b^{-1}]=D^{b^{-1}t^{-1}}\in \langle D \rangle$.
So, assume $n\ge 2$.
As above, $[t^{-1},b^{-n}]\in \langle D \rangle$.

\begin{claim}\label{cl:3}
$t^{-(n-1)}b^{-(n-1)}(tb)^{n-1}=D^{(b^{-1}t^{-1})^{n-1}b^{n-1}t^{n-1}}\cdot
t^{-(n-1)}b^{-(n-1)}(tb)^{n-2}bt$.
\end{claim}

\begin{proof}[Proof of Claim \ref{cl:3}]
By the transformation $t\to t^{-1}$, $b\to b^{-1}$,
the relation of Claim \ref{cl:1} changes into
\[t^{-(n-1)}b^{-(n-1)}(tb)^{n-1}=[t^{-1},b^{-1}]^{(b^{-1}t^{-1})^{n-2}b^{n-1}t^{n-1}}\cdot
t^{-(n-1)}b^{-(n-1)}(tb)^{n-2}bt.
\]
Since $[t^{-1},b^{-1}]=D^{b^{-1}t^{-1}}$, we have the conclusion.
\end{proof}

Again, if $n=2$, then the right hand side is $D^{b^{-1}t^{-1}bt}=D$.

\begin{claim}
If $n\ge 3$, then
\[
t^{-(n-1)}b^{-(n-1)}(tb)^{n-2}bt=[t,b^{n-1}]^{t^{n-2}}\cdots [t,b^3]^{t^2}[t,b^2]^{t}.
\]
\end{claim}

\begin{proof}
This can be proved on an induction on $n$ as in the proof of Claim \ref{cl:2}.
We omit this.
\end{proof}

Since $[t,b^m]\in \langle D\rangle$ for $m\ge 1$,
we obtain $b^{-n}\in \langle D\rangle$.
\end{proof}

\begin{proof}[Proof of Theorem \ref{thm:main}]
By Propositions \ref{prop:n} and \ref{prop:-n},
both of $b^n$ and $b^{-n}$ belong to the semigroup $\langle D\rangle$.
Hence $\langle D\rangle$ contains the identity.
Since $D\ne 1$ by Lemma \ref{lem:D},
$D$ is a generalized torsion element.
\end{proof}

\begin{remark}
For the case $n=2$, Naylor and Rolfsen \cite{NR} showed that
$[b^{-1},t]$ in our notation is a generalized torsion element.
This equals to $D^{b^{-1}}$, which is a conjugate of our $D$.
\end{remark}

\section{Comments}

A group without generalized torsion elements is called an $R^*$-group in literature (\cite{MR}).
It was an open question whether the class of bi-orderable groups
coincided with the class of $R^*$-groups \cite[p.79]{MR}.
Unfortunately, the answer is known to be negative.
However, there might be a possibility that two classes coincide among knot groups.
Thus we expect that any non-bi-orderable knot group
would admit a generalized torsion element.
Many knot groups are now known to be non-bi-orderable by \cite{CGW,CDN,CR,PR}.
It would be a next problem to examine such knot groups.

\section*{Acknowledgments}
The author would like to thank Makoto Sakuma for helpful conversations.


\bibliographystyle{amsplain}

\end{document}